\newtheorem{theorem}{Theorem}[section]
\newtheorem{proposition}[theorem]{Proposition}
\newtheorem{definition}[theorem]{Definition}
\newtheorem{corollary}[theorem]{Corollary}
\newtheorem{lemma}[theorem]{Lemma}
\numberwithin{equation}{section}
\theoremstyle{remark}
\newtheorem{remark}[theorem]{Remark}
\newtheorem{example}[theorem]{\bf Example}
\newcommand{\R}{\mathbb{R}}
\newcommand{\C}{\mathbb{C}}
\newcommand{\D}{\mathbb{D}}
\newcommand{\dd}{\mathrm{d}}
\newcommand{\e}{\mathbf{e}}
\newcommand{\red}{\textcolor{red}}
\newcommand{\magenta}{\textcolor{magenta}}
\begin{document}

\title[Duality Theorem for Harmonic Maps]{\bf{A duality theorem for harmonic maps into inner symmetric spaces}}
\author{Josef F. Dorfmeister, Peng Wang }
\thanks{PW was partly supported by the Project 11971107 of NSFC. PW is thankful to the ERASMUS MUNDUS TANDEM Project for the financial support to visit the TU M\"{u}nchen. PW is also thankful to the Department of Mathematics of  the TU  M\"{u}nchen for its hospitality.
{The authors are thankful to the referee for many valuable suggestions which made the paper more readable, and in particular, for pointing out some incorrect formulations.}}

\begin{abstract}
	In this note, we show that for any  harmonic map into a non-compact inner symmetric space one can find naturally a ``dual" harmonic map into a compact inner symmetric space which can be constructed from the same basic data (called ``potentials" in the loop group formalism). Locally also the {converse}  duality theorem holds.
\end{abstract}

\date{\today}
\maketitle
\vspace{0.5mm}  {\bf \ \ ~~Keywords:}  Duality; Iwasawa decomposition; normalized potential; non-compact symmetric space; uniton.     \vspace{2mm}

{\bf\   ~~ MSC(2010): \hspace{2mm} 53A30, 53C30, 53C35}


\section{Introduction}

The study of harmonic maps is an important topic in differential geometry which relates to many branches of mathematics. In the 1980s, it was known that harmonic maps from surfaces into compact symmetric spaces are  closely related with integrable systems.  An application of these techniques has lead to many new insights, see for instance \cite{Uh,DPW} and references therein.

Harmonic maps from surfaces into non-compact symmetric spaces are of  importance as well,  since such harmonic  maps appear naturally in many geometric and physical problems. But the study of harmonic maps into non-compact symmetric spaces is more complicated than the study of the compact case. In particular, globally smooth results are not so easy to come by, due to  technical difficulties, specific to the {non-compactness of the symmetric target space},  which cause the occurrence of singularities.

In this note, we derive a ``duality theorem" for harmonic maps into non-compact {inner} symmetric spaces. More precisely this means that for any  harmonic map into a non-compact inner symmetric space one can find naturally a harmonic map into a compact  {inner} symmetric space which can be constructed from the same basic data (called ``potentials" in the loop group formalism). Locally also the {converse} duality theorem holds.

Our  first  main result  relates a harmonic map into a non-compact inner symmetric space to a harmonic map  into a compact  {inner} symmetric space as follows:

\begin{theorem}\label{thm-noncompact}
	Let $G/K$ be an inner, non-compact symmetric space with $G$ being a real semisimple and  simply connected \footnote{  {We recall that the notion  ``simply connected"
includes the notion ``connected"}} Lie group. Let $\e$ be the identity element of $G$. Then there exists a maximal compact Lie subgroup $U$ of $G^{\C}$ such that $(  {U} \cap  {K}^{\mathbb{C}})^{\mathbb{C}}= {K}^{\mathbb{C}}$ and $ {U}/ ( {U} \cap  {K}^{\mathbb{C}} )$ is a compact, inner  symmetric space. Moreover, let $  M$  be a connected, simply connected Riemann surface with a basepoint $p_0 \in M$.
	\begin{enumerate}
		\item Let ${ {f}}:  {M}\rightarrow  {G}/ {K}$ be a harmonic map with    $f(p_0)=\e K$.
		Then there exists a  unique  harmonic map  ${{f}_ { {U}}}:  {M} \rightarrow   {U}/ ( {U} \cap  {K}^{\mathbb{C}})$
		into the compact, inner  symmetric space $ {U}/ ( {U} \cap  {K}^{\mathbb{C}} )$ which has the same normalized potential as ${ {f}}$ and $f_U(p_0)=\e ( {U} \cap  {K}^{\mathbb{C}} )$.

		\item Conversely,  let $ {f}_{ {U}}:  {M} \rightarrow {U}/ ( {U} \cap  {K}^{\mathbb{C}} )$ be a harmonic map with $f_U(p_0)=\e ( {U} \cap  {K}^{\mathbb{C}} )$.
		Then there exists a neighbourhood  $ {M}_0\subset  {M}$ of $p_0$ and a harmonic map
		$ {f} :  {M}_0 \rightarrow  {G}/ {K}$ which has the same normalized potential as
		$  {f}_{ {U}}$.
	\end{enumerate}
The harmonic maps $f$ and $f_U$ are related via the Iwasawa decompositions of their extended frames  w.r.t. different real loop groups $\Lambda G_{\sigma}$ and $\Lambda U_{\sigma}$ (see Section 2 for details).
\end{theorem}
Note that the difference in the above two directions is due to the fact that the Iwasawa decomposition for the loop groups of  compact Lie groups is  global,
 while the Iwasawa decomposition for loop groups of non-compact Lie groups is not global \cite{Ba-Do,B-R-S,PS}.  In \cite{Wang-a}, a Willmore two-sphere is presented which provides, together with its adjoint surface,  a harmonic map $f:S^2\setminus \gamma\rightarrow SO^+(1,7)/SO^+(1,1)\times SO(6)$  which is not defined on some circle $\gamma \subset S^2$ (see \cite{Wang-a}),  but for which the compact dual harmonic map  $f_{ {U}}:S^2\rightarrow SO(8)/SO(2)\times SO(6)$  is globally defined.

The {\it duality method}, as stated in the theorem above, applies to all harmonic maps from simply connected Riemann surfaces $M$ to symmetric spaces $G/K$; hence, in particular, to the case $M = S^2$.

If $M$ is not  {simply connected, then we denote by   $\hat M$  its} universal covering. The harmonic map
$f: M \rightarrow G/K$  can be lifted to a harmonic map $\hat f:\hat M\rightarrow G/K$. It is well-known that there exists an $S^1-$family of harmonic maps, named the associated family $\hat f_{\lambda}:\hat M\rightarrow G/K$, $\lambda\in S^1$, of $\hat f$ with $\hat f_{\lambda}|_{\lambda=1}=\hat f$ (\cite{DPW}, see also Section 2).
In general, $\hat f_{\lambda}$ will not descend to a  harmonic map defined on $M$ for all $\lambda\in S^1$.
 $f$ is called {\bf totally symmetric} if all harmonic maps $\hat{f}_{\lambda}$ descend to harmonic maps $f_{\lambda}:M\rightarrow G/K$. In other words,  the monodromy representation induced by $f_{\lambda}$ is trivial on $M$. It turns out that the duality theorem also holds for totally symmetric harmonic maps.
\begin{theorem}\label{totally symmetric} We retain the  {notation and the assumptions of
Theorem \ref{thm-noncompact}, except that now} $M$ is assumed to be simply connected. Then
$f$ is totally symmetric if and only if its compact dual surface $f_U$ is totally symmetric.
\end{theorem}

This result applies in particular to harmonic maps of  ``finite uniton type" (see Section 5),
i.e. to harmonic maps satisfying the assumptions (``totally symmetric") of Theorem \ref{totally symmetric} and admitting an extended frame which is a Laurent polynomial in the loop parameter $\lambda$.

The last condition is no restriction for compact $M$, since we prove
in Theorem \ref{M compact}  that all totally symmetric harmonic maps $f: M \rightarrow G/K$  defined on some compact Riemann surface $M$ are of finite uniton type. See Section 6 for illustrating examples.

As a consequence, there are many harmonic maps to which the results of this paper
do apply. In particular, our results are vital for the discussion of Willmore surfaces of finite uniton type  (see \cite{DoWa2} and Section 4 below), since the duality theorem permits to apply the work of Burstall-Guest \cite{BuGu}, to the investigation of Willmore surfaces in spheres.  Along this line, in \cite{Wang-iso}  a new  Willmore 2-sphere in $S^6$ has been produced which  solves a long open problem posed by Ejiri \cite{Ejiri1988}. Moreover, in \cite{Wang-1}, the normalized potentials of Willmore 2-spheres in $S^n$ are classified in the spirit of \cite{BuGu,DoWa2}.  More generally, for the (harmonic) conformal Gauss maps of Willmore surfaces in spheres $S^m, m \geq 3,$   where the target space is a non-compact symmetric space \cite{DoWa11,DoWa12},  substantial insights can be obtained by applying the duality theorem.   To be concrete, using this duality theorem, we can classify the conformal Gauss maps of Willmore 2-spheres by  classifying  all compact dual harmonic maps of these harmonic maps \cite{Wang-1}.

Note that compact, but not totally symmetric,   Willmore surfaces  {of genus $g > 0$} are,  in general, not of finite uniton type; but {to} the ones which are  of finite uniton type one can apply the above duality theorem (see Section 5).

To the authors' best knowledge,  the procedure outlined in this paper is so far the only {known method} to describe harmonic two-spheres, and moreover harmonic maps of finite uniton type, into non compact symmetric spaces. The main reason for this seems to be the following:  Morse theory  is a crucial tool for the investigation of harmonic two-spheres into compact symmetric spaces \cite{BuGu}, but that there is no Morse theory for the non-compact loop groups.

Finally we would like to remark that  we would expect the duality theorem also
to hold globally for certain harmonic maps $f:M \rightarrow G/K$, for which M is a non-simply-connected Riemann surface and where the monodromy representation induced by the harmonic map is non-trivial. This new class of harmonic maps shall be investigated in a separate publication.

We will recall the necessary results concerning harmonic maps in terms of loop groups in Section 2. In Section 3 we give a  proof of the {first} main result and in Section 4 we give a proof of the second main result. In Section 5 we discuss harmonic maps of finite uniton type and apply, in particular, the two main results to this important class of harmonic maps.
The paper ends with  some  concrete examples  illustrating our results presented in Section 6.\footnote{ Following the suggestion of  {an} anonymous referee we have divided the paper entitled ``Willmore surfaces in spheres via loop groups I: generic cases and some examples "(arXiv:1301.2756)
into three parts. The present paper is part III  and \cite{DoWa11,DoWa12} are part I and part II respectively. }


\section{The DPW method for harmonic maps}

\subsection{Harmonic maps into  symmetric spaces} \label{sect2.1}

Let $G$ be a simply-connected, semi-simple, real Lie group, with $\e\in G$ be its identity element.
Let $G/K$ be a symmetric space defined by an involution $\sigma: G\rightarrow G$ such that $(Fix^\sigma(G))^0\subset K\subset Fix^\sigma(G)$. Here
$Fix^\sigma(G)=\{g\in G|\sigma(g)=g\}$ and $(Fix^\sigma(G))^0$ denotes the connected component of $Fix^\sigma(G)$ containing $\e$.

 Let $\pi:G\rightarrow G/K$ be the projection from $G$ to $G/K$. Let $\mathfrak{g}$ and $\mathfrak{k}$ be the Lie algebras of $G$ and $K$ respectively. The generalized Cartan decomposition $\mathfrak{g}=\mathfrak{k}\oplus\mathfrak{p}$ (relative to $\sigma$) satisfies
$[\mathfrak{k},\mathfrak{k}]\subset\mathfrak{k},$
$[\mathfrak{k},
\mathfrak{p}]\subset\mathfrak{p},
\ [\mathfrak{p},\mathfrak{p}]\subset\mathfrak{k}.$
Let $f:M\rightarrow G/K$ be a  harmonic map from a connected  Riemann surface $M$ to the symmetric space $G/K$. Let $\hat{\pi}_M :\hat{M} \rightarrow M$ denote the universal covering of $M$. Then $\hat{f} = f \circ \hat{\pi}_M : \hat{M} \rightarrow G/K$
is harmonic and  there exists a frame $F:  \hat{M} \rightarrow G$ of
$\hat{f} = f \circ \hat{\pi}_M$.  The frame $F$ is called ``frame of $f$" as well as ``frame of $\hat{f}$".
Then for the Maurer-Cartan form and the Maurer-Cartan equation we obtain:
\[
F^{-1}\dd F= \alpha, \hspace{4mm} \mbox{and} \hspace{4mm } \dd \alpha+\frac{1}{2}[\alpha\wedge\alpha]=0.
\]
Decomposing these equations w.r.t. $\mathfrak{g}=\mathfrak{k}\oplus\mathfrak{p}$, we have $\alpha=\alpha_{\mathfrak{k}}+\alpha_{\mathfrak{p}}$ with $\alpha_\mathfrak{k}\in \Gamma(\mathfrak{k}\otimes T^*M)$ and $\alpha_{\mathfrak{k}}\in \Gamma(\mathfrak{p}\otimes T^*M)$.
Decompose $\alpha_1$ into the $(1,0)-$part $\alpha_\mathfrak{p}^\prime$ and the $(0,1)-$part $\alpha_\mathfrak{p}^{\prime \prime}$. Set
 \[
\alpha_{\lambda}=\lambda^{-1}\alpha_{\mathfrak{p}}^\prime+\alpha_\mathfrak{k}+\lambda\alpha_{\mathfrak{p}}^{\prime \prime},  \quad \lambda\in S^1.
\]
\begin{lemma} $($\cite{DPW}$)$ The map  $f:M\rightarrow G/K$ is harmonic if and only if
\[ \dd \alpha_{\lambda}+\frac{1}{2}[\alpha_{\lambda}\wedge\alpha_{\lambda}]=0\ \ \hbox{for all}\ \lambda \in S^1.
\]
\end{lemma}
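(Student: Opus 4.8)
The plan is to verify the equivalence by expanding the zero-curvature expression $\dd\alpha_\lambda+\tfrac12[\alpha_\lambda\wedge\alpha_\lambda]$ as a Laurent polynomial in $\lambda$ and matching its coefficients against the (automatic) Maurer--Cartan equation for $\alpha=F^{-1}\dd F$ and against the frame form of the harmonic map equation. Throughout I abbreviate $A:=\alpha_\mathfrak{k}$, $B':=\alpha_\mathfrak{p}^\prime$ and $B'':=\alpha_\mathfrak{p}^{\prime\prime}$, so that $\alpha_\lambda=\lambda^{-1}B'+A+\lambda B''$.

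First I would compute the curvature coefficient by coefficient. Using that $[\,\cdot\wedge\,\cdot\,]$ is symmetric on $1$-forms and the Cartan relations $[\mathfrak{k},\mathfrak{k}]\subset\mathfrak{k}$, $[\mathfrak{k},\mathfrak{p}]\subset\mathfrak{p}$, $[\mathfrak{p},\mathfrak{p}]\subset\mathfrak{k}$, the expression splits into five pieces, with coefficients of $\lambda^{-2},\dots,\lambda^{2}$ equal to
\[
\tfrac12[B'\wedge B'],\quad \dd B'+[A\wedge B'],\quad \dd A+\tfrac12[A\wedge A]+[B'\wedge B''],\quad \dd B''+[A\wedge B''],\quad \tfrac12[B''\wedge B''],
\]
respectively. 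The two extreme terms vanish identically because $B'$ is of type $(1,0)$ and $B''$ of type $(0,1)$ on the Riemann surface $M$, so $[B'\wedge B']$ is a $(2,0)$-form and $[B''\wedge B'']$ a $(0,2)$-form, both zero in complex dimension one. Since a Laurent polynomial in $\lambda$ vanishes on all of $S^1$ iff each coefficient vanishes, flatness of $\alpha_\lambda$ for all $\lambda$ is equivalent to the simultaneous vanishing of the three middle coefficients.

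Next I would bring in the Maurer--Cartan equation $\dd\alpha+\tfrac12[\alpha\wedge\alpha]=0$, which holds automatically for $\alpha=A+B'+B''=F^{-1}\dd F$. Decomposing it with respect to $\mathfrak{g}=\mathfrak{k}\oplus\mathfrak{p}$ and again using $[\mathfrak{p},\mathfrak{p}]\subset\mathfrak{k}$ together with the type argument, its $\mathfrak{k}$-part reads $\dd A+\tfrac12[A\wedge A]+[B'\wedge B'']=0$ and its $\mathfrak{p}$-part reads $(\dd B'+[A\wedge B'])+(\dd B''+[A\wedge B''])=0$. Thus the $\lambda^{0}$-coefficient already vanishes, and the $\lambda^{-1}$- and $\lambda^{+1}$-coefficients always sum to zero. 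Consequently flatness of $\alpha_\lambda$ for all $\lambda$ is equivalent to the single equation $\dd B'+[A\wedge B']=0$, which then forces $\dd B''+[A\wedge B'']=0$ as well.

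Finally I would match this with harmonicity. Writing the harmonic map equation for $f=\pi\circ F$ in frame form as $\dd(\star\alpha_\mathfrak{p})+[\alpha_\mathfrak{k}\wedge\star\alpha_\mathfrak{p}]=0$, and using $\star B'=-iB'$, $\star B''=iB''$ for the Hodge star on $M$, this becomes $(\dd B'+[A\wedge B'])-(\dd B''+[A\wedge B''])=0$. Combined with the $\mathfrak{p}$-part of Maurer--Cartan, which gives the sum of the same two expressions, harmonicity is equivalent to each of them vanishing separately, i.e. to $\dd B'+[A\wedge B']=0$ --- exactly the flatness condition found above. I expect the step requiring the most care to be this last matching: namely the reduction of the tension field of a map into $G/K$ to the frame identity $\dd(\star\alpha_\mathfrak{p})+[\alpha_\mathfrak{k}\wedge\star\alpha_\mathfrak{p}]=0$, and the attendant sign and type conventions for $\star$ and for the $(1,0)/(0,1)$ splitting (an opposite sign convention for $\star$ merely replaces the difference by its negative and changes nothing). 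Everything else is bookkeeping with the Cartan relations and the vanishing of $(2,0)$- and $(0,2)$-forms.
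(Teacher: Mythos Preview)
Your argument is correct and is the standard proof of this well-known equivalence: expand the curvature of $\alpha_\lambda$ as a Laurent polynomial in $\lambda$, kill the $\lambda^{\pm2}$ terms by type, use the Maurer--Cartan equation of $\alpha$ to handle the $\lambda^0$ term and to see that the $\lambda^{\pm1}$ terms are negatives of each other, and then identify the remaining condition $\dd B'+[A\wedge B']=0$ with harmonicity via $\dd(\star\alpha_\mathfrak{p})+[\alpha_\mathfrak{k}\wedge\star\alpha_\mathfrak{p}]=0$.

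Note, however, that the paper does \emph{not} supply its own proof of this lemma: it is stated with a citation to \cite{DPW} and left unproved. So there is no ``paper's proof'' to compare against; what you have written is essentially the argument one finds in \cite{DPW} and in the broader literature on the loop-group approach to harmonic maps. Your caveat about the sign convention for the Hodge star is well placed, and your identification of the reduction of the tension field to the frame identity as the only nontrivial external input is accurate.
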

\begin{definition} Assume $\hat M$ has a complex coordinate $z$ with $0\in\hat M$, and assume w.l.g. that $F(0,0)=\e$. Let $F(z,\bar{z},\lambda)$ be the solution
on $\hat{M}$ to the equation
\[\dd F(z,\bar{z},\lambda)= F(z,\bar{z}, \lambda) \, \alpha_{\lambda},\  F(0,0,\lambda)=\e.\]
For simplicity, we will  sometimes write  $F_{\lambda}(z, \bar z)$  or simply $F_{\lambda}$ for $F(z,\bar{z}, \lambda)$. Also  note that $F(z,,\bar{z},\lambda)|_{\lambda=1}=F(z,\bar{z})$ holds.
Moreover,  for each $\lambda \in S^1$ the frame $F_\lambda$ is the {\em extended frame}  of a harmonic map $\hat{f}_\lambda : \hat{M} \rightarrow G/K$. The family of harmonic maps $\hat{f}_\lambda$ will be called the {\em associated family} associated to $\hat{f}$ or simply ``associated to $f$".
 \end{definition}

 \begin{remark}   The family of maps $\hat{f}_\lambda$ actually makes sense for all $\lambda \in \C^*$, but will generally only be harmonic for $\lambda \in S^1$.

 In general, the map $\hat{f}_\lambda :\hat{M} \rightarrow G/K$ can not be pushed down to become a harmonic map  from $M$ to $G/K$. So the associated family is generally only defined on $\hat{M}.$
 Of course, by construction, the harmonic map  $\hat{f}_{\lambda = 1}$ is the canonical lift of $f$ and thus descends to the originally given  harmonic map $f:M \rightarrow G/K$.
 \end{remark}

\subsection{The DPW construction of harmonic maps}

\subsubsection{Two decomposition theorems}

Let $G$ be a  simply connected  real Lie group and $G^\C$ its  {simply connected} complexification
(For details on complexifications, in particular of semi-simple Lie groups, see \cite{Hochschild}.
 In particular, we will always assume that $G^\C$ is realized as a group of linear transformations.)
Let $\sigma$ denote an involution of $G$ and $K$ a closed subgroup of  {$G$} satisfying $(Fix^\sigma(G))^0 \subset K \subset Fix^\sigma(G)$.
Then $\sigma$ fixes $\mathfrak{k} = Lie K$.
The extension of $\sigma$  to an involution of $G^\C$ has
 $\mathfrak{k}^\C$  as  its fixed point algebra.
By abuse of notation we put $K^\C = (Fix^\sigma(G^\C))^0$.

Let $\Lambda G^{\mathbb{C}}_{\sigma}$ denote the group of loops in $G^\C$ twisted by $\sigma$. Let $\Lambda^+G^{\mathbb{C}}_{\sigma}$  denote the subgroup of loops which extend holomorphically to the  open unit disk $|\lambda| <1$. Set
\[
\Lambda_B^+ G^{\mathbb{C}}_{\sigma}:=\{\gamma\in\Lambda^+G^{\mathbb{C}}_{\sigma}~|~\gamma|_{\lambda=0}\in \mathfrak{B} \}.
\]
Here $\mathfrak{B}\subset K^{\mathbb{C}}$ is defined from the {classical unique } Iwasawa decomposition
$K^{\mathbb{C}}=K\cdot\mathfrak{B}.$
Let $\Lambda^-_*G^{\mathbb{C}}_{\sigma}$ denote the loops that extend holomorphically into $\infty$ and take the value $\e$ at infinity.
The following two results usually are stated as part of a more detailed description of group decompositions,
namely   ``Birkhoff decomposition" and ``Iwasawa decomposition"  respectively.  But for this paper the stated results and the ones of the following theorem suffice.
 {Note that  $\Lambda G_{\sigma}\times \Lambda^+_{B}G^{\mathbb{C}}$ acts naturally on
$\Lambda G^{\mathbb{C}}_{\sigma}$ by $(A,V_+).g = AgV_+^{-1}$. The corresponding cosets are called ``Iwasawa cells".}

\begin{theorem} \label{thm-iwasawa}\
\begin{enumerate}
\item The multiplication $\Lambda^-_* G^{\mathbb{C}}_{\sigma}\times \Lambda^+G^{\mathbb{C}}\rightarrow\Lambda G^{\mathbb{C}}_{\sigma}$ is a  biholomorphic map onto the open and dense subset $ \Lambda^-_* G^{\mathbb{C}}_{\sigma}\cdot \Lambda^+G^{\mathbb{C}}$ (big Birkhoff cell) of $\Lambda G^{\mathbb{C}}_{\sigma}$.
\item The multiplication $\Lambda G_{\sigma}\times \Lambda^+_{B}G^{\mathbb{C}}\rightarrow\Lambda G^{\mathbb{C}}_{\sigma}$ is a real analytic diffeomorphism onto the open  subset $\Lambda G_{\sigma}\cdot \Lambda^+_{B}G^{\mathbb{C}} \subset \Lambda G^{\mathbb{C}}_{\sigma}$.\\
     Moreover, if the group $G$ is compact, then the last two sets are equal.
     \item {The union of all open Iwasawa cells is dense in $ \Lambda G^{\mathbb{C}}_{\sigma}$.}
\end{enumerate}
\end{theorem}
\begin{proof}
{For the convenience of the reader we give a proof.  It is easy to verify that all the groups
occurring are Banach Lie groups and, more precisely, Banach Lie subgroups of
$\Lambda G^{\mathbb{C}}_{\sigma}$. Moreover, the Lie algebras of the
Banach Lie groups on the left side of the conclusion add up to the Lie algebra
$Lie \Lambda G^{\mathbb{C}}_{\sigma}$  of the right side and even the corresponding groups have trivial intersection.
In view of   Corollary 3.1.4. of \cite{DGS}  {the first parts of $(1)$ and $(2)$ are proven. For the second part of $(1)$  it now only remains to prove that the image set in question } is dense in $\Lambda G^{\mathbb{C}}_{\sigma}$. But this follows from the properties of the holomorphic section discussed in \cite{PS}, see Corollary 2.4 and Corollary 2.5 of \cite{DPW} and observe that the argument given there applies to  our norm as well.

The  {second  statement of $(2)$} is well-known (see e.g. \cite{PS}).

Finally, $(3)$ follows from Proposition 5.4 in \cite{Do}.}
\end{proof}

\subsubsection{The DPW construction }

Here we recall the DPW construction for harmonic maps. Let $\mathbb{D}\subset\mathbb{C}$ be a disk or $\mathbb{C}$ itself, with complex coordinate $z$.

\begin{theorem} \label{thm-DPW}\cite{DPW}
We retain the notation and the assumptions introduced above.
\begin{enumerate}
\item Let $f:\mathbb{D}\rightarrow G/K$ be a harmonic map with an extended frame $F(z,\bar{z},\lambda) \in \Lambda G_{\sigma}$ based at $z=0$, i.e., $F(0,0,\lambda)=\e$. Then there exists a Birkhoff decomposition
\[
F_-(z,\lambda)=F(z,\bar{z},\lambda) F_+(z,\bar{z},\lambda),~ \hbox{ with }~ F_+(z, \bar z, \lambda) \in\Lambda^+G^{\mathbb{C}}_{\sigma}, \ F(z, \bar z, \lambda) \in  \Lambda G_{\sigma},
\]
such that $F_-(z,\lambda):\mathbb{D} \rightarrow\Lambda^-_*G^{\mathbb{C}}_{\sigma}$ is meromorphic  {on $\D$.} Moreover, the Maurer-Cartan form of $F_-$ is of the form
\[
\eta=F_-^{-1}\dd F_-=\lambda^{-1}\eta_{-1}(z)\dd z,
\]
with $\eta_{-1}$ independent of $\lambda$. The $1$-form $\eta$ is called the {\emph normalized potential} of $f$.
\item Let $\eta$ be a $\lambda^{-1}\cdot\mathfrak{p}-$valued  meromorphic 1-form on $\mathbb{D}$
 {which is holomorphic at $z =0$.} Let $F_-(z,\lambda)$ be a  {meromorphic} solution to $F_-^{-1}\dd F_-=\eta$, $F_-(0,\lambda)=\e$  {which is defined globally  on $\D$.}
Then, on an  open  subset $\mathbb{D}_{\mathfrak{I}}$ of $\mathbb{D}$ containing the base
point $z =0$,
\[
 {F_-(z,\lambda)}=\tilde{F}(z,\bar{z},\lambda)\cdot { \tilde{F}_+(z,\bar{z},\lambda)},\ \hbox{ with }\ \tilde{F}\in\Lambda G_{\sigma},\  {\hbox{ and }  \tilde{F}_+}\in\Lambda ^+_{B} G^{\mathbb{C}}_{\sigma}
\]
{real analytic}. This way, one obtains an extended frame $\tilde{F}(z,\bar{z},\lambda)$ of some harmonic map from
$\mathbb{D}_{\mathfrak{I}}$  to $G/K$ with $\tilde{F}(0,0,\lambda)=\e$. In particular, if $G$ is compact, then  one can choose $\mathbb{D}_{\mathfrak{I}} =\D \setminus \{ \mbox{poles of} \hspace{2mm} \eta  \}.$
Moreover, all harmonic maps can be obtained in this way, and these two procedures are inverse to each other if the normalization $\e$ at some base point is used.
\end{enumerate}
\end{theorem}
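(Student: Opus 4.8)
The plan is to handle the two directions by the two factorization theorems, each combined with the Lemma recalled above that characterizes harmonicity by flatness of $\alpha_\lambda$ for all $\lambda$. Direction (1) is powered by the Birkhoff decomposition (Theorem \ref{thm-iwasawa}(2)) and direction (2) by the Iwasawa decomposition (Theorem \ref{thm-iwasawa}(1)). In both cases the computational core is a grading argument in the loop parameter $\lambda$: one reads off which powers of $\lambda$ can occur in a Maurer--Cartan form from the loop-group membership of its factors.

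For (1), since $F(0,0,\lambda)=\e=\e\cdot\e$ lies in the (open) big Birkhoff cell, a neighbourhood of $z=0$ does too, and there I write $F=F_-F_+^{-1}$ with $F_-\in\Lambda^-_*G^{\mathbb{C}}_\sigma$ and $F_+\in\Lambda^+G^{\mathbb{C}}_\sigma$, equivalently $F_-=FF_+$. Then
\[
\eta=F_-^{-1}\dd F_-=F_+^{-1}\alpha_\lambda F_++F_+^{-1}\dd F_+ .
\]
Because $F_-\in\Lambda^-_*$ takes the value $\e$ at $\lambda=\infty$, the form $\eta$ contains only strictly negative powers of $\lambda$; because $F_+\in\Lambda^+$ is regular at $\lambda=0$ and $\alpha_\lambda=\lambda^{-1}\alpha_{\mathfrak{p}}'+\alpha_{\mathfrak{k}}+\lambda\alpha_{\mathfrak{p}}''$, the lowest power present is $\lambda^{-1}$. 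Hence only the single power $\lambda^{-1}$ survives, with $\eta=\lambda^{-1}\big(F_+|_{\lambda=0}\big)^{-1}\,\alpha_{\mathfrak{p}}'\,\big(F_+|_{\lambda=0}\big)$. The $\sigma$-twisting places the coefficient in $\mathfrak{p}^{\mathbb{C}}$, and since $\alpha_{\mathfrak{p}}'$ is of type $(1,0)$ the form $\eta$ has no $\dd\bar z$-part; thus $\partial_{\bar z}F_-=0$ and $\eta=\lambda^{-1}\eta_{-1}(z)\,\dd z$ with $\eta_{-1}$ holomorphic. The delicate step here is to promote this local statement to a \emph{meromorphic} $F_-$ on all of $\D$: the factorization exists only pointwise on the open dense big cell, and one must check that its complement is a discrete analytic set across which $F_-$ and $F_+$ extend meromorphically, which I would obtain from the holomorphic $z$-dependence just established together with the standard meromorphic dependence of Birkhoff factors on parameters.

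For (2), I first solve $F_-^{-1}\dd F_-=\eta$, $F_-(0,\lambda)=\e$, giving $F_-:\D\to\Lambda^-_*G^{\mathbb{C}}_\sigma$, holomorphic in $z$ off the poles of $\eta$. As $F_-(0,\lambda)=\e$ lies in the open dense Iwasawa set, I apply Theorem \ref{thm-iwasawa}(1) on an open dense $\D_{\mathfrak{I}}\ni0$ to write $F_-=\tilde F\,\tilde F_+$ with $\tilde F\in\Lambda G_\sigma$ and $\tilde F_+\in\Lambda^+_BG^{\mathbb{C}}_\sigma$, and compute
\[
\tilde F^{-1}\dd\tilde F=\tilde F_+\,\eta\,\tilde F_+^{-1}-(\dd\tilde F_+)\,\tilde F_+^{-1}.
\]
Since $\tilde F_+\in\Lambda^+$ and $\eta$ carries the single power $\lambda^{-1}$, the lowest power here is again $\lambda^{-1}$, with a $(1,0)$, $\mathfrak{p}^{\mathbb{C}}$-valued coefficient. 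The reality condition defining $\Lambda G_\sigma$ makes the $\lambda$-expansion of $\tilde F^{-1}\dd\tilde F$ symmetric about $\lambda^0$, so the highest power is $\lambda^{+1}$ and the form reads $\lambda^{-1}\tilde\alpha_{\mathfrak{p}}'+\tilde\alpha_{\mathfrak{k}}+\lambda\tilde\alpha_{\mathfrak{p}}''$. This is precisely the shape in the Lemma, and since $\tilde F^{-1}\dd\tilde F$ automatically satisfies the Maurer--Cartan equation for each fixed $\lambda$, that Lemma makes $\pi\circ\tilde F(\cdot,\cdot,1)$ harmonic with extended frame $\tilde F$ normalized by $\tilde F(0,\lambda)=\e$.

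Finally, for the assertions that all harmonic maps arise and that the two procedures are mutually inverse, I would invoke uniqueness of the Birkhoff and Iwasawa factorizations under the stated normalizations, the only care being the mismatch between the normalizing subgroups $\Lambda^+$ and $\Lambda^+_B$, a constant $K^{\mathbb{C}}$-gauge removed by fixing the value $I$ at the base point. The genuine obstacle throughout is the global/meromorphic control already flagged: both cells are merely open and dense, so the two constructions are a priori local, and the theorem's content is exactly the asymmetry that (1) extends meromorphically over all of $\D$ whereas (2) produces a harmonic map only on the open dense $\D_{\mathfrak{I}}$.
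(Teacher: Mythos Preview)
The paper does not supply its own proof of this statement: Theorem~\ref{thm-DPW} is quoted from \cite{DPW} as background, with no argument given. So there is nothing in the paper to compare your proposal against beyond the statement itself.

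That said, your sketch is the standard DPW argument and is correct in outline. The $\lambda$-grading computations in both directions are right, including the identification $\eta_{-1}=\big(F_+|_{\lambda=0}\big)^{-1}\alpha_{\mathfrak p}'\big(F_+|_{\lambda=0}\big)$ and the use of the real form condition on $\tilde F$ to truncate the positive powers in (2). You have also correctly isolated the only genuinely nontrivial point, namely the meromorphic extension of $F_-$ across the complement of the big Birkhoff cell in (1); this is exactly where the original \cite{DPW} does the work, and your invocation of holomorphic $z$-dependence plus analyticity of the Birkhoff factorization is the right mechanism. One small wording issue: in the inverse direction you speak of ``a constant $K^{\mathbb C}$-gauge'' to reconcile $\Lambda^+$ and $\Lambda^+_B$, but the discrepancy is a $K^{\mathbb C}$-valued \emph{function} of $z$ (the $\lambda^0$-term of $F_+$), not a constant; it is the base-point normalization $F(0,0,\lambda)=\e$ together with uniqueness of the factorizations that pins everything down.
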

For the case of two-spheres we refer to Theorem 3.11 of \cite{DoWa12}.


\section{  Proofs of the  Duality Theorems}

In this section we  consider a connected,  simply connected, non-compact, semi-simple, real  Lie group $G$
and a non-compact, inner, pseudo-Riemannian symmetric space $G/K$ defined by $\sigma$.
We will first  {construct the } dual, compact symmetric space $ {U}/( {U}\cap  {K}^{\mathbb{C}})$  of $G/K$,
and will show that it is also inner. Then we will derive proofs of Theorem 1.1 and 1.2.

\subsection{The dual inner, compact symmetric space }

Set $\mathfrak{g} ^\C=\mathfrak{g}\otimes \C$ for $\mathfrak{g} =Lie(G)$.
Denote by $\tau$ the complex anti-linear involution defining $\mathfrak{g}$ in  $\mathfrak{g} ^\C$. Obviously $\sigma\tau=\tau\sigma$.
Let $\theta $ be some Cartan involution of $\mathfrak{g} ^\C$ commuting with $\sigma$ and $\tau$.
Let  $ {G}^\C$ denote the simply connected complexification of $ {G}$ \cite{Hochschild}, with Lie algebra $\mathfrak{g} ^\C$.  {Then}  $\sigma, \tau$ and $\theta$ have extensions to pairwise commuting involutive group homomorphisms of  $ {G}^\C$.
Since $G$ is  {a semi-simple Lie group}, we can assume that the natural image of $ {G}$ in $ {G}^\C$ is a closed subgroup of $ {G}^\C$ \cite{Hochschild}.
Let $ {U} = Fix ^{\theta}( {G}^\C)$.
Then $ {U}$ is a maximal compact subgroup of  $ {G}^\mathbb{C}$, and $ {U}$ is  { simply connected}
( See for example, page 3, Lemma 2 of \cite{Aom}).
 Moreover, observing that
$ {K}^\mathbb{C} = (Fix ^{\sigma }( {G}^\C))^0 \subset  {G}^\C$ is a  connected complex Lie group    satisfying
$ {K}^\C \cap  {G} =  {K}$, we have

\begin{definition}
The symmetric space $ {U}/( {U}\cap  {K}^{\mathbb{C}})$ is called the dual (compact) symmetric space of $G/K$.
\end{definition}

\begin{lemma}\label{lemma-inner}  { We retain the notation and the assumptions above.} If $G/K$ is an inner symmetric space, then
the symmetric space $ {U}/( {U}\cap  {K}^{\mathbb{C}})$ is also an inner symmetric space.
\end{lemma}

\begin{proof}
Let $\mathfrak{g} = \mathfrak{k} + \mathfrak{p}$
be the decomposition of $\mathfrak{g}$ relative to $\sigma$ and $\mathfrak{g} = \mathfrak{h} + \mathfrak{m}$
the decomposition of $\mathfrak{g}$ relative to $\theta$.
Then
\[\mathfrak{g} = \mathfrak{k} \cap  \mathfrak{h} + \mathfrak{k} \cap \mathfrak{m} +
\mathfrak{p} \cap\mathfrak{h} + \mathfrak{p} \cap \mathfrak{m}\]
as a direct sum of vector spaces. Moreover, for the Lie algebra $\mathfrak{u}$ of $ {U}$   we have
\begin{equation*} {\begin{split}\mathfrak{u} & = \mathfrak{k} \cap  \mathfrak{h} + \mathfrak{p} \cap \mathfrak{h} +
i \left( \mathfrak{k} \cap  \mathfrak{m} + \mathfrak{p} \cap \mathfrak{m} \right)= \left(\mathfrak{k} \cap  \mathfrak{h} + (i \mathfrak{k}) \cap (i\mathfrak{m}) \right) +
\left(\mathfrak{p} \cap  \mathfrak{h} + (i \mathfrak{p}) \cap (i\mathfrak{m}) \right)\\
&= \mathfrak{k^{\mathbb{C}}} \cap \mathfrak{u}  + \mathfrak{p^{\mathbb{C}}} \cap   \mathfrak{u} .\end{split}}
\end{equation*}
It is easy to see now that $ \left( \mathfrak{k}^{\mathbb{C}} \cap \mathfrak{u}\right)^{\mathbb{C}} = \left(\mathfrak{k} \cap  \mathfrak{h} + (i\mathfrak{k}) \cap(i \mathfrak{m}) \right)^{\mathbb{C}} = \mathfrak{k}^{\mathbb{C}} $ holds.
\begin{equation} \label{about-k}
\left( \mathfrak{k}^{\mathbb{C}} \cap \mathfrak{u} \right)^{\mathbb{C}} = \mathfrak{k}^{\mathbb{C}}
\end{equation}
As a consequence, for the maximal compact Lie subgroup $ {U}$ of $ {G}^{\mathbb{C}}$ constructed above we obtain
\begin{equation} \label{innerU}
( {U}\cap  {K}^{\mathbb{C}})^{\mathbb{C}}= {K}^{\mathbb{C}}
\end{equation}
 which follows, since both sides represent connected Lie subgroups of $ {G}^{\mathbb{C}}$
  by the  Springer-Steinberg Theorem (see  \cite{Helgason}, chapter VII, Theorem 8.2),
  and  have the same Lie algebra.
Since $\sigma$ is inner, there exists a maximal abelian subalgebra $\mathfrak{a}$ of $\mathfrak{k}$ which already is maximal abelian in $\mathfrak{g}$
(see, e.g., \cite{Helgason}, chapter  IX, Theorem 5.7)). Using  { (\ref{about-k}) and  (\ref{innerU})}  it then follows  $rank (\mathfrak{u}\cap \mathfrak{k}^{\C})= rank(\mathfrak{k})=rank (\mathfrak{g})=rank(\mathfrak{u})$. Now Theorem 5.6 of chapter IX, \cite{Helgason}, implies that the restriction of $\sigma$ to $ {U}$ is inner. This finishes the proof of the lemma.
\end{proof}

\subsection{Duality theorems}

We first assume that $M$ is simply connected as in Theorem \ref{thm-noncompact}.
Let ${ {f} :  {M} }\rightarrow G/K$ be a harmonic map.
 Then we also have an extended frame
$ {F}: {M}  \rightarrow \Lambda  {G}_{\sigma}
\subset\Lambda  {G}^{\mathbb{C}}_{\sigma}$.
(Note that here for $G = SL(2,\R)$ the last two ``inclusions''  actually describe inclusions of the image of $ {G}$ under the natural homomorphism.  Also recall that by \cite{DoWa12} in the case $M = S^2$ the frame $F$ is permitted/required  to have singular points.)

\begin{example}
For a strongly conformally harmonic map $f:M\rightarrow SO^+(1,2m+3)/SO(1,3)\times SO(2m)$ associated with a strong Willmore map $f:  {M}\rightarrow S^{2m+2}$ \cite{DoWa11}, we have $ {G} \cong Spin(1,2m+3)^0$,
 $ G^{\mathbb{C}} \cong  SO(1, 2m+3, \C)$ and $  {G}^\C \cong Spin(1,2m+3,\mathbb{C})$.
Moreover, we have
$ K=Spin(1,3)\times Spin(2m)$ and $  {K}^\C = Spin (1,3,\C) \times Spin (2m,\C).$
Note that $G$ is the (connected) simply connected covering of $SO^+(1,2m+3)$.
 Hence $ {U}\cong Spin(2m+4),$ and
$ {U} \cap  {K}^{\mathbb{C}}=Spin(4)\times Spin(2m)$. On the Lie algebra level, we have
 \[\begin{split}&Lie( {U})=\mathfrak{u}=\{A\in \mathfrak{so}(1,2m+3,\mathbb{C})|A=(a_{jk}),\ ia_{1j}\in\mathbb{R},j=1,\cdots,2m+4\},\\
  &(\mathfrak{u}\cap\mathfrak{k}^{\mathbb{C}})^{\mathbb{C}}=\mathfrak{so}(1,3,\mathbb{C})\times \mathfrak{so}(2m,\mathbb{C})=\mathfrak{k}^{\mathbb{C}}.
\end{split}\]
\end{example}

 \ \\
{\em Proof of Theorem \ref{thm-noncompact}.}
 Restricting $\sigma$ to $ {U}$, we  consider the twisted loop group $\Lambda  {U}^{\mathbb{C}}_{\sigma}$ and the
corresponding Iwasawa decomposition of $\Lambda  {U}^{\mathbb{C}}_{\sigma}$ relative to $\Lambda  {U}_{\sigma}.$
The first key observation is that $\Lambda  {G}^{\mathbb{C}}_{\sigma} =\Lambda  {U}^{\mathbb{C}}_{\sigma}$
 {holds}.
In fact, by construction, complexifying $ {G}$ and complexifying $ {U}$ yields the same complex Lie group $ {G}^{\mathbb{C}}$
and the holomorphic extension of $\sigma$, considered as an involution of $ {G}$ or considered as an involution of  $ {U},$ yields the same involution of $ {G}^{\mathbb{C}}$.
Therefore the  complex twisted loop group, constructed by starting from $ {G}$ or starting from $ {U}$ are the same, that is,
\[\Lambda  {G}^{\mathbb{C}}_{\sigma} =\Lambda  {U}^{\mathbb{C}}_{\sigma},\ \hbox{ and } \ \Lambda^{+}  {G}^{\mathbb{C}}_{\sigma}=\Lambda^{+} {U}^{\mathbb{C}}_{\sigma}.\]
Applying this  {and the fact $U$ being compact,} we can also perform the following  {(global)} Iwasawa decomposition of $\Lambda  {G}^{\C}_{\sigma}$:
\begin{equation} \label{otherIwasawa}
\Lambda  {U}_{\sigma} \cdot\Lambda^{+}  {G}^{\mathbb{C}}_{\sigma}=\Lambda  {G}^{\mathbb{C}}_{\sigma}.
\end{equation}
Now let us turn to the harmonic maps. First we assume that $ {M}=\mathbb{D}$ is a contractible open subset
 of $\C$.  {Then we obtain a global extended frame $F(z,\bar{z},\lambda)$ of
$ { {f}}$ on $\D$.}
 Note, here and in what follows we will use  for simplicity  $F$ for $F(z,\bar{z},\lambda)$. Applying the decomposition (\ref{otherIwasawa}) to the frame $F$ we obtain
\begin{equation}\label{eq-noncompact}
F=F_{ {U}}\cdot W_{+},\hspace{3mm} F_{ {U}}\in \Lambda  {U}_{\sigma},\hspace{3mm}  W_{+}\in \Lambda^{+} {U}^{\mathbb{C}}_{\sigma}.
\end{equation}
Writing as usual  $\alpha=F^{-1}\dd F=\lambda^{-1}\alpha_{\mathfrak{p}}'+
\alpha_0+\lambda\alpha_{\mathfrak{p}}'',$
we obtain
\[F_{ {U}}^{-1}\dd F_{ {U}}=\alpha_{ {U}}=W_{+}\alpha W_{+}^{-1}-\dd W_{+}W_{+}^{-1}=
\lambda^{-1}W_0\alpha_{\mathfrak{p}}'W_0^{-1}+
\alpha_{ {U},0} + \lambda \alpha_{ {U},1}+ O(\lambda^2).\]
Since $W_+\in \Lambda^+  {U}^{\mathbb{C}}_{\sigma}=\Lambda^+  {G}^{\mathbb{C}}_{\sigma}$, we have
$W_0\in  {K}^{\mathbb{C}},$  and $ \sigma( W_0 )= W_0.$
Since $\alpha_{\mathfrak{p}}'\in   \mathfrak{p}^{\mathbb{C}}$,
we obtain moreover $$\sigma(\alpha_{\mathfrak{p}}')=-\alpha_{\mathfrak{p}}', ~~ \hbox{ and }~~ \sigma (W_0\alpha_{\mathfrak{p}}'W_0^{-1})=-W_0\alpha_{\mathfrak{p}}'W_0^{-1}.$$
Since $\alpha_{ {U}}$ is fixed by the anti-holomorphic involution $\theta$ we infer
\[\alpha_{ {U}}=\lambda^{-1} {\alpha}'_{\mathfrak{p}}+
 \alpha_{\mathfrak{k}}+
\lambda {\alpha}''_{\mathfrak{p}},~~\hbox{ with }~~ \alpha_{\mathfrak{k}} \in \mathfrak{u} \cap \mathfrak{k}^{\mathbb{C}},\hbox{ and  }
 {\alpha}''_{\mathfrak{p}} = \theta \left(  {\alpha}'_{\mathfrak{p}}\right) \in
\mathfrak{p}^{\mathbb{C}}.\]
As a consequence, $F_{ {U}}$ is the frame of a harmonic map ${ {f}_{  U} }:  {M} \rightarrow  {U}/( {U}\cap  {K}^{\mathbb{C}})$, where actually
\[{ {f}_{ {U}}}=F_{ {U}} \mod \  {U} \cap { K}^\mathbb{C}.\]
Computing the Birkhoff decomposition of $F$ as well as the Birkhoff decomposition of $F_{ {U}}$ we obtain
\[F_-F_+=F=F_{ {U}}W_+=F_{ {U},-}\cdot F_{ {U},+}\cdot W_+,\]
with
\[F_-=I+O(\lambda^{-1}), \hspace{3mm} F_{ {U},-}=
I+O(\lambda^{-1})\in \Lambda^-_* {U}^{\mathbb{C}}_{\sigma}= \Lambda^-_* {G}^{\mathbb{C}}_{\sigma}.\]
This implies
$F_-=F_{ {U},-},$ whence we also have  $\eta=F_-^{-1}\dd F_-=F_{ {U},-}^{-1}\dd F_{ {U},-}.$

 Now it suffices to discuss the case $M = S^2$.
 Following the argument of the proof of Theorem 3.11 in \cite{DoWa12},
we remove two points $p_1,p_2$ from $S^2$,  {w.l.g.,} the two poles, and obtain two frames $F_1$ and $F_2$ defined on $U_1 = S^2 \setminus \{p_1\}$ and  $U_2 = S^2 \setminus \{p_2\}$ respectively.
Since we also assume that these two frames both take the value $\e$ at the  basepoint $p_0\in S^2 \setminus \{ p_1,p_2\} $ we obtain
$F_1 = F_2 k$ on $S^2 \setminus \{ p_1,p_2\}$,
where  $k \in K$. Following the argument above we form the Iwasawa decompositions
relative to the maximal compact group $U$ and obtain with obvious notation
$F_{1,U} W_{1,U+} = F_{2,U} W_{2,U,+} k$. Combining the three matrices in $\Lambda^+G^\C_\sigma$ we thus obtain $F_{1,U} = F_{2,U} W_{U,+} .$ But this implies $W_{U,+} \in U \cap K^\C.$
As a consequence we obtain for the harmonic maps  {$f_{U,j}, j = 1,2$ associated to} $F_{j,U} $ on $U_j,$ the relation
$ f_{U,1} = f_{U,2}$ on $U_1 \cap U_2$. But, by construction, $f_{U,1}$ extends smoothly through $p_2$ and
$f_{U,2}$ extends smoothly through $p_1,$ whence the two harmonic maps combine to yield a harmonic map on all of $S^2$.

Conversely, let ${ {f}_{ {U}}}: M\rightarrow {U}/ ( {U} \cap  {K}^{\mathbb{C}} )$ be a harmonic map
with ${ {f}_{ {U}}}|_{p_0}=\e ( {U} \cap  {K}^{\mathbb{C}} )$ and let $F_{ {U}}\in \Lambda U_\sigma$  be an extended frame for
${ {f}_{ {U}}}$ satisfying $F_{ {U}}|_{p_0}=\e$.
Then there exists a neighbourhood  $M_0\subset M$ of $0$ on which
 $F_{ {U}} = F V_+$ holds with $F\in \Lambda G_\sigma$ and $V_+\in \Lambda^{+} {U}^{\mathbb{C}}_{\sigma}.$ Then ${ {f}} \equiv F \mod K $ satisfies the claim. \hfill$\Box $

\begin{remark} \label{duality}
  We would like to point out that the last part of  Theorem \ref{thm-noncompact} shows that the
``duality" relating  harmonic maps into compact symmetric spaces to  the ones in the dual non-compact symmetric spaces is in general only local, due to the fact that the corresponding Iwasawa decompositions for non-compact symmetric spaces is in general not global. It would be interesting to understand this duality in a more global sense.

\end{remark}

\begin{remark} An anonymous referee of this paper has asked about the dependence of the loop group
method under a change of base point. We appreciate this question very much, but would like to
refer the interested reader for a detailed discussion of this issue to \cite{Do-base}.
\end{remark}

\section{Totally symmetric harmonic maps}


\subsection{Full harmonic maps}

\begin{definition}
We retain the notation introduced in Section \ref{sect2.1}.
The harmonic map $f : M \rightarrow G/K$ is called {\bf full} if and only if  each $g \in G$
satisfying $gf(p) = f(p)$  for all $p \in M$ is contained in the center  of $G$, where
\begin{equation}
Center(G) = \{ g \in G; gh = hg \hspace{2mm} \mbox{for all} \hspace{2mm} h \in G \}.
\end{equation}
\end{definition}
We will need the following
\begin{proposition}
Assume the harmonic map $f : M \rightarrow G/K$ is full, then also each harmonic map in the associated family of $f$ is full.
\end{proposition}
\begin{proof}
 Assume the harmonic map $f : M \rightarrow G/K$ is full.
 We can assume w.l.g. that $M$ is simply connected. Let $f_\lambda$ denote the associated family of $f$.
 Then $f_\lambda \equiv F_\lambda \mod K$, where $F_\lambda(0,0) = I, \ F_\lambda^{-1} \dd F_\lambda = \alpha_\lambda,$ and $\alpha_\lambda = \lambda^{-1} \alpha'_\mathfrak{p} +  \alpha_\mathfrak{k} +  \lambda \alpha^{\prime \prime}_\mathfrak{p}, $ with $\alpha_{\lambda}|_{\lambda=1} = \alpha = F^{-1}\dd F.$
Assume  $g \in G$ satisfies $g f_\lambda = f_\lambda$ for any fixed $\lambda \in S^1$.  Then  $g F_\lambda = F_\lambda k $ for some $ k : {M} \rightarrow K.$  A differentiation of this equation yields $g F_\lambda \alpha_\lambda = F_\lambda \alpha_\lambda k + F_\lambda\dd k ,$ whence $k \alpha_\lambda =   \alpha_\lambda k + \dd k.$ This equation is equivalent to $\dd k = [k, \alpha_\mathfrak{k}]$ and
 { $0 = [k, \alpha'_\mathfrak{p}], 0 = [k, \alpha''_\mathfrak{p}]$.}
  Note that this system of equations is independent of $\lambda$, i.e.
 $\dd k = [k, \alpha_\mu]$ holds for all $\mu \in S^1$.
Therefore, for the case $\mu = 1$ we obtain for $F = F_{\mu = 1}$ and $\alpha = \alpha_{\mu}|_{\mu=1}$ the equation
$\dd( FkF^{-1}) = F \alpha k F^{-1} + F \dd k F^{-1} - F k F^{-1} F \alpha F^{-1} =  F( \dd k - [k, \alpha]) F^{-1} = 0$.
Therefore, $FkF^{-1} = g_1 \in G$ is independent of $z$, and $g_1 f = f$, where we recall $f = f_{\lambda}|_{\lambda =1}$.  {Since $f$ is full, {$g_1 \in Center(G) $}. But then
$k=F^{-1}g_1F=g_1$ is contained in $Center(G) $. As a consequence, $g F_\lambda = F_\lambda k = kF_\lambda$ and $g = k \in Center(G) $ follows.} Hence $f_\lambda$ is full.
\end{proof}

\subsection{Totally symmetric harmonic maps}


\begin{definition} Let $M$ be a connected Riemann surface.
Let $f : M \rightarrow G/K$ be a harmonic map  and $f_\lambda$ its associated family in the sense of Section 2.1.
Then $f$ is called {\bf totally symmetric} if each member
$\hat{f}_\lambda : \hat{M} \rightarrow G/K$ of its associated family descends to a harmonic map
$f_\lambda : M \rightarrow G/K$.
\end{definition}
From the definition  we derive now
\begin{lemma} \label{chartotsym}
Let $f : M \rightarrow G/K$ be a full  harmonic map and $\hat{f} : \hat{M} \rightarrow G/K$
its natural lift to the universal cover $\hat{M}$ of $M$. Then
the following statements are equivalent
\begin{enumerate}
\item $f$ is totally symmetric,
\item Each member $\hat{f}_{\lambda}$ of the associated family
 of $\hat{f}$ satisfies
\begin{equation*}
\gamma^*\hat{f}_{\lambda} = \hat{f}_{\lambda} \hspace{2mm} \mbox{for all} \hspace{2mm} \gamma \in \pi_1(M),
\end{equation*} \item The extended frame $F_\lambda : \hat{M} \rightarrow \Lambda G_\sigma$ satisfies
\begin{equation}\label{eq-totsym}
\gamma^*F_\lambda =  F_\lambda k_\gamma \hspace{2mm} \mbox{for all} \hspace{2mm} \gamma \in \pi_1(M) \hspace{2mm} \mbox{and with } \hspace{2mm}  k_\gamma \in K.
\end{equation}

\end{enumerate}
\end{lemma}

\subsection{Totally symmetric harmonic maps defined on compact Riemann surfaces}


The main goal of this subsection is the
\begin{proof} of Theorem \ref{totally symmetric}:
We first show that $f_ {{U}, \lambda}$ is totally symmetric if  $f$ is totally symmetric. The converse is obtained by reversing all steps.

We consider the extended family $F_\lambda : \hat{M} \rightarrow \Lambda G_\sigma$ of $f$ and the
{unique} Iwasawa decomposition
\begin{equation}\label{eq-*}
F_\lambda = F_{{U},\lambda} V_+
\end{equation}
as in the proof of Theorem \ref{thm-noncompact}.
 Applying \eqref{eq-totsym} in (3) of Lemma \ref{chartotsym} we derive
\[\gamma^*F_{ {U},\lambda} \gamma^*V_+ =   \gamma^*F_\lambda =
F_\lambda k_\gamma  =  F_{ {U},\lambda} V_+ k_\gamma.\]
 Comparing the first expression to the last expression we obtain
 $ {  F_{ {U},\lambda}^{-1}   \gamma^*F_{ {U},\lambda} =  }  V_+ k_\gamma (\gamma^*V_+)^{-1}=\hat{k}_\gamma \in K$. This implies
\begin{equation}\label{eq-**} \hspace{3mm} \gamma^*F_{ {U},\lambda} = F_{ {U},\lambda}  \hat{k}_\gamma.
\end{equation}
From this it follows immediately that each member $\hat{f}_{\hat{U},\lambda}$
of the associated family of    ${\hat{f}_ {{U}}}: \hat{M} \rightarrow  {U}/ ({U} \cap  {K}^{\mathbb{C}})$ as defined in Theorem \ref{thm-noncompact},  descends to a harmonic map    $f_{\ {U}, \lambda}: M \rightarrow   {U}/ ({U} \cap  {K}^{\mathbb{C}}).$   In particular,  $f_ {{U}, \lambda}$ is totally symmetric.
\end{proof}


\section{ Applications to finite uniton type harmonic maps}


The notion of {\bf finite uniton}  type of a harmonic map  was coined by Uhlenbeck in \cite{Uh}.
For this definition she required special properties of  ``extended solutions'', objects used extensively in that paper. Moreover, her definition was in the context of maps defined on $S^2$ or simply-connected subsets of $S^2$. In \cite{BuGu} the definition was extended (also by restrictions  to extended solutions) to harmonic maps from arbitrary Riemann surfaces $M$ to (compact inner) symmetric spaces.

In our work we primarily use  extended frames (not extended solutions).
And in view of  Theorem 3.11 of \cite{DoWa12} this also makes sense for
$M = S^2$. Therefore we prefer to give the definition of ``finite uniton type'' in terms of extended frames.

For this purpose we introduce the notion of  an {\it algebraic loop} as meaning  that the  Fourier expansion in $\lambda$ is a Laurent polynomial, i.e. it contains only finitely many powers  of $\lambda$. Such loops
will be denoted by the subscript $``alg"$, like  $\Lambda_{alg} G_{\sigma},\ \Lambda_{alg} G^{\mathbb{C}}_{\sigma}$,  { and $\Omega_{alg} G_{\sigma},$  where $\Omega G_\sigma $ consists of all loops in
$ \Lambda G_{\sigma}$ satisfying $\gamma(\lambda)|_{\lambda = 1} = \e$, the ``based loops".}
 We define
 \[ \Omega^k_{alg} G_{\sigma}:=\{\gamma\in\Omega_{alg} G_{\sigma}|\gamma=\sum_{|j|\leq k}\lambda^jT_j \}.
\]
 {We refer to page 545-547 of \cite{BuGu} for more details.}

Now we define the notion of {\it finite uniton type}.
\begin{definition}\label{def-uni}  Let $M$ be a Riemann surface, compact or non-compact. A harmonic map $f : M\rightarrow G/K$, where $G/K$ is a compact or non-compact symmetric space,  is said to be of {\it finite uniton type} if some extended frame $F$ of $f$,  defined on the universal cover
$\hat{M}$ of $M$ and satisfying $F(z_0,\bar{z}_0,\lambda)=e$ for some base point $z_0\in M$, has the following two properties:
\begin{enumerate}[$(U1)$]
\item $F (z,\bar{z},\lambda)$ descends to a map from $M$ to $(\Lambda G^{\C}_{\sigma})/K$, i.e. the map $F(z,\bar{z},\lambda):M\rightarrow (\Lambda G^{\C}_{\sigma})/K$ is well defined on $M$ (up to two singularities in the case of $M = S^2$) for all $\lambda \in S^1$.
\item  $F(z,\bar{z}, \lambda)$ is a Laurent polynomial in $\lambda$.
\end{enumerate}
\end{definition}
\vspace{2mm}

Hence $f$ is of finite uniton type if and only if there exists an extended frame $F$ for $f$ which has a trivial monodromy representation and is a Laurent polynomial in $\lambda$. In particular, in this case  $F(M) \subset \Omega^k_{alg} G_{\sigma}$ for some $k$.
It is also easy to verify that $f$ has finite uniton type if and only if $F_-$,  obtained from $F$ by the Birkhoff decomposition
$F = F_- F_+$ with  $F_- = I + \mathcal{O} (\lambda^{-1})$, descends to a map $F_- : M \rightarrow \Lambda^-_*G^\C_\sigma$ which is a Laurent polynomial.

\begin{lemma}
If ${M}$ is a simply connected Riemann surface and  $f : M \rightarrow G/K$ a harmonic map
into the symmetric space $G/K$,
then $f$ is of finite uniton type if and only if $f$ has an extended frame which is a Laurent polynomial in $\lambda$.
\end{lemma}

\begin{definition}
 Let $f:M \rightarrow G$  be a harmonic map of finite uniton type from $M$ into $G/K$ and $F$ an extended frame for $f$ satisfying $(U1)$ and $(U2)$.
We say that $f$ has {\it finite uniton number k} if
$$ F(M)\subset \Omega^k_{alg} G_{\sigma},\
\hbox{ and } F(M)\nsubseteq \Omega^{k-1}_{alg} G_{\sigma}.$$
 In this case we write  $r(f)=k$.
\end{definition}

In \cite{DoWa2} the notion of finite uniton type is investigated in much more detail. In particular, it is shown there that the definition above coincides with the definition of \cite{BuGu} and \cite{Uh}. Combining the above definition with Theorem \ref{thm-noncompact} we obtain:

\begin{theorem}\label{thm-fut}
 We retain the notation of Theorem \ref{thm-noncompact}.
\begin{enumerate}
\item Let $f: {M}\rightarrow  {G}/ {K}$  be a harmonic map and $f_{ {U}}$ the associated harmonic map into the compact symmetric space $ {U}/( {U}\cap  {K}^{\mathbb{C}})$  as in Theorem \ref{thm-noncompact}.
Then $f$ is of finite uniton type if and only if $f_{ {U}}$ is of finite uniton type.

Moreover, in this case we have $r(f) = r(f_{ {U}})$.
\item If $  M=S^2$, then $f$ is of finite uniton type.
\end{enumerate}
\end{theorem}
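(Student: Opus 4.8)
The plan is to prove the two items in sequence, using Theorem~\ref{thm-noncompact} as the bridge between the non-compact and compact settings. For item~(1), I would start from an extended frame $F$ of $f$ satisfying $(U1)$ and $(U2)$, normalized so that $F(z_0,\lambda)=e$. By Theorem~\ref{thm-noncompact} the associated frame $F_{\tilde U}$ of $f_{\tilde U}$ is obtained from $F$ by the Iwasawa factorization $F = F_{\tilde U}\, W_+$ with $W_+\in\Lambda^+\tilde U^{\mathbb C}_\sigma = \Lambda^+\tilde G^{\mathbb C}_\sigma$. The key observation already recorded in the proof of Theorem~\ref{thm-noncompact} is that the Birkhoff-negative parts coincide: $F_- = F_{\tilde U,-}$, hence $f$ and $f_{\tilde U}$ have \emph{the same} normalized potential $\eta$, and in particular $F_-$ is a Laurent polynomial that descends to $M$ if and only if $F_{\tilde U,-}$ is. Invoking the criterion stated in the excerpt (``$f$ has finite uniton type if and only if $F_-$ descends to a map $M\to\Lambda^-_*G^{\mathbb C}_\sigma$ and is a Laurent polynomial''), this immediately gives the equivalence ``$f$ of finite uniton type $\iff$ $f_{\tilde U}$ of finite uniton type''. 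For the equality of uniton numbers $r(f)=r(f_{\tilde U})$, I would argue that since $\Lambda\tilde G^{\mathbb C}_\sigma = \Lambda\tilde U^{\mathbb C}_\sigma$ and $\Lambda^+\tilde G^{\mathbb C}_\sigma=\Lambda^+\tilde U^{\mathbb C}_\sigma$, the adjoint action $\mathrm{Ad}(F)$ and $\mathrm{Ad}(F_{\tilde U})$ differ only by the factor $\mathrm{Ad}(W_+)$, which is holomorphic on the disk; comparing the reconstruction of $F$ (resp.\ $F_{\tilde U}$) from the \emph{same} $F_-$ via Birkhoff and then Iwasawa, one sees that the span of negative (equivalently positive) Fourier modes of $\mathrm{Ad}(F_-)$ controls $k$ in both cases, so the minimal $k$ with $F(M)\subset\Omega^k_{alg}\tilde G_\sigma$ equals the minimal $k$ with $F_{\tilde U}(M)\subset\Omega^k_{alg}\tilde U_\sigma$. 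Cleanest is to phrase $r(f)$ intrinsically in terms of $F_-$ (the degree of the Laurent polynomial $F_-$, or of $\mathrm{Ad}(F_-)$), which is manifestly shared.

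For item~(2), when $\tilde M = S^2$ I would pass to the compact dual via item~(1): by Theorem~\ref{thm-noncompact} there is an associated harmonic map $f_{\tilde U}: S^2 \to \tilde U/(\tilde U\cap\tilde K^{\mathbb C})$ into a \emph{compact} inner symmetric space, and by Lemma~\ref{lemma-inner} this target is indeed inner. Now I invoke the classical fact (Uhlenbeck~\cite{Uh}, Burstall--Guest~\cite{BuGu}) that every harmonic map from $S^2$ into a compact inner symmetric space is of finite uniton type — the essential input being that $S^2$ is simply connected with no nonconstant holomorphic $1$-forms, forcing the relevant monodromy to be trivial and all the geometric data to be polynomial in $\lambda$. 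Hence $f_{\tilde U}$ is of finite uniton type, and then item~(1) transports this back to conclude that $f$ is of finite uniton type as well. One point that needs a remark is that for $\tilde M = S^2$ the frame $F$ is only defined with (at most two) singular points, per Theorem~3.11 of \cite{DoWa12} and Definition~\ref{def-uni}; I would note that the finite-uniton criterion in terms of $F_-$ is formulated precisely to accommodate this, so the argument goes through unchanged.

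The main obstacle I expect is making the uniton-number equality $r(f)=r(f_{\tilde U})$ rigorous rather than merely plausible: one must be careful that the ``descends to $M$'' condition $(U1)$ and the polynomiality condition $(U2)$ are genuinely preserved under the Iwasawa factor $W_+$, and that the degree bookkeeping for $\mathrm{Ad}(\cdot)$ is unaffected by multiplying on the right by a $\lambda$-holomorphic loop. The safe route is to reduce everything to statements about $F_-$ and $\mathrm{Ad}(F_-)$, which is common to $f$ and $f_{\tilde U}$ by the computation $F_-=F_{\tilde U,-}$ already in hand; the monodromy of $F$ equals that of $F_{\tilde U}$ because $W_+$ is single-valued on $\tilde M$ while both $F$ and $F_{\tilde U}$ are reconstructed from the single-valued $F_-$, so triviality of one monodromy is equivalent to triviality of the other. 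A secondary, more routine point is citing the Burstall--Guest/Uhlenbeck finiteness theorem in a form applicable to our extended-frame conventions; since \cite{DoWa2} is quoted as showing the equivalence of the finite-uniton definitions, I would simply defer to that.
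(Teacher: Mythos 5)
Your proposal is correct in substance but follows a somewhat different route from the paper in both items. For item (1) the paper argues directly from the Iwasawa relation $F=F_{\tilde U}W_+$ of \eqref{eq-noncompact}: since $W_+$ and $W_+^{-1}$ contain only non-negative powers of $\lambda$, the negative Fourier tails of $F$ and $F_{\tilde U}$ coincide, reality then gives the positive tails, and the extremal powers (hence $r(f)=r(f_{\tilde U})$) and the descent condition $(U1)$ are read off from the same relation; you instead route everything through $F_-=F_{\tilde U,-}$ and the stated criterion in terms of the Birkhoff-negative part. Both are one-step consequences of the same decomposition, and your reduction to $F_-$ is arguably the cleaner way to make $r(f)=r(f_{\tilde U})$ precise, since $\mathrm{Ad}(F_-)$ is literally shared --- though you should still supply the small reality argument showing that the extremal negative power of $\mathrm{Ad}(F_-)$ actually determines $k$ for the full frame. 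For item (2) the difference is more substantive: you quote ``every harmonic $S^2$ in a compact inner symmetric space is of finite uniton type'' as a black box, whereas the paper cannot quite do that because Uhlenbeck's and Burstall--Guest's finiteness theorems are phrased for \emph{extended solutions}, not extended frames; the paper therefore explicitly constructs the extended solution $\Phi(z,\bar z,\lambda)=F_{\tilde U}(z,\bar z,\lambda)F_{\tilde U}(z,\bar z,1)^{-1}$ for the map $\mathbb{F}=F_{\tilde U}(\cdot,-1)F_{\tilde U}(\cdot,1)^{-1}$, checks the Maurer--Cartan form, and only then invokes Theorems 2.1 and 11.5 of \cite{Uh} to get polynomiality of $F_{\tilde U}$. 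You acknowledge this gap and defer the frame/solution dictionary to \cite{DoWa2}, which is defensible but weaker than the paper's self-contained computation; if you want your write-up to stand alone, you should include that short frame-to-solution translation rather than citing it.
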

\begin{proof}
 (1) Let $F$ and $F_{ {U}}$ be the (local) extended frame of $f$ and $f_{ {U}}$ respectively and assume that $f$ or $f_{ {U}}$ is of finite uniton type and $F$ and $F_{ {U}}$ the corresponding frame respectively.
 By \eqref{eq-noncompact},  the Fourier expansion of $F$ contains only finitely many  powers of $\lambda^{-1}$ if and only if $F_{ {U}}$ has only finitely many powers of $\lambda^{-1}$ in its Fourier expansion. Using the reality of $F$ and $F_{ {U}}$, we conclude that $F$ is a Laurent polynomial of $\lambda$ if and only if  $F_{ {U}}$ is a Laurent polynomial of $\lambda$.  The equality of $r(f)$ and $r(f_{ {U}})$ follows,
 since the extremal (negative, whence positive) power of $\lambda$ occurring in $F$ and $F_{ {U}}$ are the same. Finally, by  \eqref{eq-noncompact}
it is easy to check that $F\mod K$ is defined on $M$ if and only if
$F_{ {U}}\mod  {U} \cap  {K}^\mathbb{C}$ is defined on $M$.

(2). In the case of $ {M}=S^2$, by  {what we have just shown in the proof of $(1)$, it suffices to verify }that
$f_{ {U}}$ is of finite uniton type.
 To show this, we  first claim that $\mathbb{F}(z,\bar z)=F_{ {U}}(z,\bar z, -1)F_{ {U}}(z,\bar z, 1)^{-1}$ is a harmonic map into $ {U}$ having $\Phi(z,\bar z,\lambda)=F_{ {U}}(z,\bar z, \lambda)F_{ {U}}(z,\bar z, 1)^{-1}$
 as its extended solution. Here $F_{ {U}}(z,\bar z, \lambda)$ is the extended frame of $f_{  U}$ with the
 Maurer-Cartan form
  $\alpha_{ {U}}=\lambda^{-1} \alpha'_{\mathfrak{p}}+
\alpha_{\mathfrak{k}}+
\lambda\alpha''_{\mathfrak{p}}$ and $F_{ {U}}(z_0,\bar z_0,\lambda)=\e$. Straightforward computations show
\[\mathbb{A}=\frac{1}{2}\mathbb{F}^{-1}\dd\mathbb{F}=-F_{ {U}}(z,\bar z, 1)(\alpha'_{\mathfrak{p}}+\alpha''_{\mathfrak{p}})F_{ {U}}(z,\bar z, 1)^{-1}=\mathbb{A}^{(1,0)}+\mathbb{A}^{(0,1)},\]
and
 \[\Phi(z,\bar z,\lambda)^{-1}\dd \Phi(z,\bar z,\lambda)=(1-\lambda^{-1})\mathbb{A}^{(1,0)}+(1-\lambda)\mathbb{A}^{(1,0)}. \]
 The claim now follows from Theorem 2.1 of \cite{Uh}. Moreover, since $\Phi(z_0,\bar z_0,\lambda)=\e$ by Theorem 11.5 of \cite{Uh},  the extended solution $\Phi(z,\bar z,\lambda)$ is a Laurent polynomial of $\lambda$. Hence $F_{ {U}}(z,\bar z, \lambda)$ is a Laurent polynomial of $\lambda$ and, as a consequence, $f_{ {U}}$ is of finite uniton type.
\end{proof}

We want to prove a result which shows that certain types of harmonic maps are of finite uniton type. By $(U1)$ above and in view of $(3)$ of Lemma \ref{chartotsym} it is clear that a finite uniton map always is totally symmetric. Thus one needs to look for a condition on a harmonic map which ensures the existence of an extended frame which is a Laurent polynomial in $\lambda$.
Here Theorem 11.5 of Uhlenbeck  is of importance, see \cite{Uh}.
Note, this theorem holds for all compact Riemann surfaces by \cite{Segal}. We thus claim
\begin{theorem} \label{M compact} {Let $M$ be a compact Riemann surface.}
 \begin{enumerate}
 \item {Any totally symmetric harmonic map from $M$ to any compact inner symmetric space is of finite uniton type.}
\item Let $f:M \rightarrow  {G}/ {K}$ be a totally symmetric harmonic map into some non-compact symmetric space.   {Then $f_U : M \rightarrow  U/ {U\cap K^{\C}}$ is of finite uniton type.}
\end{enumerate}
\end{theorem}
\begin{proof}
(1)  Uhlenbeck has proven (Theorem 11.5 of \cite{Uh})  that for harmonic maps from $S^2$ to
any  unitary group $U(n)$, one can always find an extended solution which is a Laurent polynomial in $\lambda$.
By Lemma 2.15 of \cite{Segal},  for all harmonic maps from any compact Riemann  surface into any inner compact symmetric space, one can always find an extended solution which is a Laurent polynomial in $\lambda$.   {This result also holds for extended frames: For the case $M = S^2$ see e.g. Remark 3.10 and Theorem 3.11 of \cite{DoWa12}. For all other compact Riemann surfaces $M$ a careful inspection of the arguments of \cite{Segal} shows that its result actually holds for all totally symmetric harmonic maps from any  compact Riemann surface to any inner compact symmetric space (also see Theorem 1.2 of \cite{BuGu}).}  Therefore, for any totally symmetric harmonic map $\check{f}: M \rightarrow \check{U} / \check{H}$  into the compact inner symmetric space $ \check{U} / \check{H}$  with defining symmetry $\check{\sigma}$
  there exists some $A(\lambda) \in \Lambda  {\check{U}} _{\check{\sigma}}$ such that
   {$A(\lambda) F_{{\check{U}} }$ is a Laurent polynomial. In poarticular, since
 $F_{\check{U}}|_{z=z_0}=e$, we have that $A(\lambda)$ is a Laurent polynomial in $\lambda$. As a consequence,
 $F_{\check{U}}$ is also a Laurent polynomial in $\lambda$, i.e., $\check{f}$ is of finite uniton type.}

(2).
Since $f$ is totally symmetric, the dual harmonic map $f_ { {U}}: M \rightarrow  {U}/ ( {U} \cap  {K}^{\mathbb{C}})$   is also totally symmetric by Theorem \ref{totally symmetric}. So by (1), $f_{U}$ is of finite uniton type. Then by (1) of Theorem \ref{thm-fut}, $f$ is of finite uniton type.
\end{proof}

\begin{remark}
 { We conjecture that the proof of \cite{Segal} actually shows that also  each $f$ as in $(2)$ is of finite uniton type.}
\end{remark}


\section{Examples}


We end this paper with an example of the conformal Gauss map $f$ of a Willmore surface and the compact dual harmonic map $f_U$ of $f$ (See  \cite{Wang-iso} for more details).
\begin{example} \cite{Wang-iso}
We consider  $SO^+(1,5)$ as the connected component of $SO(1,5)=\{A\in Mat(6,\R)\ | \ A^tI_{1,5}A=I_{1,5}, |A|=1\}$ containing $I$. Here $I_{1,5}=\hbox{diag}(-1,1,1,1,1,1)$. Let $f_2$ and $f_4$ be meromorphic functions on $M$ with $f_2(z_0)=f_4(z_0)=0$ at some base point $z_0\in M$, where $M$ is
a compact Riemann surface of genus $\geq 0$. Let
\begin{equation}\eta=\lambda^{-1}\left(
                      \begin{array}{cc}
                        0 &  {B}_1 \\
                        - {B}_1^tI_{1,3} & 0 \\
                      \end{array}
                    \right)dz,\ \hbox{ with }\   {B}_1=\frac{1}{2}\left(
                     \begin{array}{cccc}
                      -if_2' &  f_2'  \\
                      if_2'&  -f_2'  \\
                      f_4' & if_4'   \\
                      if_4' & -f_4'  \\
                     \end{array}
                   \right),
                   \end{equation}
                   be the normalized  potential of a harmonic map $f(z,\bar z, \lambda): M\rightarrow SO^+(1,5)/SO^+(1,3)\times SO(2)$. Then the extended frame $F(z,\bar z, \lambda)=(\phi_1,\phi_2,\phi_3,\phi_4,\phi_5,\phi_6)$ of $f(z,\bar z, \lambda)$ is
                   \begin{equation}\left(
                       \begin{array}{cccccc}
                         1+\frac{|f_2|^2}{2} &\frac{|f_2|^2}{2}  & \frac{-i(\bar{f}_2f_4-f_2\bar f_4)}{2\sqrt{d_3}} & \frac{\bar{f}_2f_4+f_2\bar f_4}{2\sqrt{d_3}} &
                         \frac{-i(\lambda^{-1}f_2-\lambda\bar{f}_2)}{2\sqrt{d_3}}&   \frac{\lambda^{-1}f_2+\lambda\bar{f}_2}{2\sqrt{d_3}} \\
                         -\frac{|f_2|^2}{2} & 1-\frac{|f_2|^2}{2} & \frac{i(\bar{f}_2f_4-f_2\bar f_4)}{2\sqrt{d_3}} & -\frac{\bar{f}_2f_4+f_2\bar f_4}{2\sqrt{d_3}}  &\frac{i(\lambda^{-1}f_2-\lambda\bar{f}_2)}{2\sqrt{d_3}}&   -\frac{\lambda^{-1}f_2+\lambda\bar{f}_2}{2\sqrt{d_3}} \\
                         0 & 0&  \frac{1}{\sqrt{d_3}} & 0& \frac{\lambda^{-1}f_4+\lambda\bar{f}_4}{2\sqrt{d_3}} & \frac{i(\lambda^{-1}f_4-\lambda\bar{f}_4)}{2\sqrt{d_3}} \\
                         0 & 0 & 0 &  \frac{1}{\sqrt{d_3}}& \frac{i(\lambda^{-1}f_4-\lambda\bar{f}_4)}{2\sqrt{d_3}}&  -\frac{\lambda^{-1}f_4+\lambda\bar{f}_4}{2\sqrt{d_3}}  \\                         \frac{-i(\lambda^{-1} f_2-\lambda\bar f_2)}{2} & \frac{-i(\lambda^{-1}f_2-\lambda\bar f_2)}{2}  &-\frac{\lambda^{-1}f_4+\lambda\bar{f}_4}{2\sqrt{d_3}} & \frac{-i(\lambda^{-1}f_4-\lambda\bar{f}_4)}{2\sqrt{d_3}} &\frac{1}{ \sqrt{d_3}}&0 \\
                         \frac{\lambda^{-1}f_2+\lambda\bar f_2}{2}  & \frac{\lambda^{-1}f_2+\lambda\bar f_2}{2}    & \frac{-i(\lambda^{-1}f_4-\lambda\bar{f}_4)}{2\sqrt{d_3}} &\frac{\lambda^{-1}f_4+\lambda\bar{f}_4}{2\sqrt{d_3}} &0&\frac{1}{ \sqrt{d_3}} \\
                       \end{array}
                     \right).
                   \end{equation}
                    Here $d_1=1+|f_2|^2+|f_4|^2$, $d_2=1+|f_2|^2$ and $d_3=1+|f_4|^2$.                    Note that $f_{\lambda}$ is the conformal Gauss map of the minimal surface $x_{\lambda}$ in $\mathbb{R}^4$:
\begin{equation}\label{eq-min-iso2}x_{\lambda}=\left(
                                             \begin{array}{cccc}
                        -\frac{if_2'}{f_4'} +\frac{i\overline{f_2'}}{\overline{f_4'}} \\
                        -\frac{f_2'}{f_4'}-\frac{\overline{f_2'}}{\overline{f_4'}} \\
                     -i(\lambda^{-1}f_2-\lambda\bar{f}_2)+\frac{i\lambda^{-1}f_2'f_4}{f_4'}-\frac{i\lambda\overline{f_2'}\bar{f}_4}{\overline{f_4'}} \\
                     (\lambda^{-1}f_2+\lambda\bar{f}_2)-\frac{\lambda^{-1}f_2'f_4}{f_4'}-\frac{\lambda\overline{f_2'}\bar{f}_4}{\overline{f_4'}} \\
                                                              \end{array}
                                           \right). \end{equation}
The extended frame  $ F_U(z,\bar z, \lambda)$  of the dual harmonic map $f_U(z,\bar z, \lambda)$ into the dual compact symmetric space $SO(6)/SO(4)\times SO(2)$ is
\begin{equation}\label{eq-compact-dual}\left(
                       \begin{array}{cccccc}
                         \frac{1}{\sqrt{d_2}} &0 & \frac{\bar{f}_2f_4+f_2\bar f_4}{2\sqrt{d_1d_2}} & \frac{i(\bar{f}_2f_4-f_2\bar f_4)}{2\sqrt{d_1d_2}}&   -\frac{\lambda^{-1}f_2+\lambda\bar f_2}{2\sqrt{d_1}} & -\frac{i(\lambda^{-1}f_2-\lambda\bar f_2)}{2\sqrt{d_1}}   \\
                         0 & \frac{1}{\sqrt{d_2}}& \frac{i(\bar{f}_2f_4-f_2\bar f_4)}{2\sqrt{d_1d_2}} & \frac{-\bar{f}_2f_4-f_2\bar f_4}{2\sqrt{d_1d_2}} &
                          \frac{i(\lambda^{-1}f_2-\lambda\bar f_2)}{2\sqrt{d_1}} & -\frac{\lambda^{-1}f_2+\lambda\bar f_2}{2\sqrt{d_1}}  \\
                         0 & 0&  \frac{\sqrt{d_2}}{\sqrt{d_1}} & 0 &  \frac{\lambda^{-1}f_4+\lambda\bar f_4}{2\sqrt{d_1}}    &   \frac{i(\lambda^{-1}f_4-\lambda\bar f_4)}{2\sqrt{d_1}}  \\
                         0 & 0 & 0 &   \frac{\sqrt{d_2}}{\sqrt{d_1}}  &
                          \frac{i(\lambda^{-1}f_4-\lambda\bar f_4)}{2\sqrt{d_1}} & -\frac{\lambda^{-1}f_4+\lambda\bar f_4}{2\sqrt{d_1}}  \\
                         \frac{\lambda^{-1}f_2+\lambda\bar f_2}{2\sqrt{d_2}}  &\frac{-i(\lambda^{-1}f_2-\lambda\bar f_2)}{2\sqrt{d_2}}
                         &-\frac{\lambda^{-1}f_4+\lambda\bar{f}_4}{2\sqrt{d_1d_2}} & \frac{-i(\lambda^{-1}f_4-\lambda\bar{f}_4)}{2\sqrt{d_1d_2}} & \frac{1}{\sqrt{d_1}} &0 \\
                         \frac{i(\lambda^{-1}f_2-\lambda\bar f_2)}{2\sqrt{d_2}}  &  \frac{\lambda^{-1}f_2+\lambda\bar f_2}{2\sqrt{d_2}}  & \frac{-i(\lambda^{-1}f_4-\lambda\bar{f}_4)}{2\sqrt{d_1d_2}} &\frac{\lambda^{-1}f_4+\lambda\bar{f}_4}{2\sqrt{d_1d_2}}& 0 &\frac{1}{\sqrt{d_1}} \\
                       \end{array}
                     \right).
                   \end{equation}
Note that all  these examples are totally symmetric harmonic maps of finite uniton type on $M$.

\end{example}

\begin{example}  We consider the influence of change of initial conditions explicitly for the above example.
Set $\chi=F(z,\bar z, \lambda)|_{z=z_1}$. Then $\hat{F}(z,\bar z, \lambda)=\chi^{-1}F(z,\bar z, \lambda)$ is  the extended frame  of  $\hat f(z,\bar z, \lambda)=\chi^{-1}f(z,\bar z, \lambda)$. One computes that the normalized potential of   $\hat f(z,\bar z, \lambda)$ is
\begin{equation}\label{eq-hat-eta}
\hat{\eta}=\lambda^{-1} \left(
                      \begin{array}{cc}
                        0 & \hat{B}_1 \\
                        -\hat{B}_1^tI_{1,3} & 0 \\
                      \end{array}
                    \right)\dd z,  \hbox{ with }
                \hat{B}_1= \frac{1}{2}\left(
                     \begin{array}{cccc}
                      -i\hat{f}_2' &  \hat{f}_2'   \\
                      i\hat{f}_2'&  -\hat{f}_2' \\
                      \hat{f}_4' & i\hat{f}_4'  \\
                      i\hat{f}_4' & -\hat{f}_4'  \\
                     \end{array}
                   \right).
                    \end{equation}
                    Here
    \begin{equation}
\hat f_2= \frac{ \sqrt{1+|{f}_4(z_1)|^2}(f_2-f_2(z_1))}{1+\bar{f}_4(z_1)f_4},\ \hat{f}_4=\frac{f_4-f_4(z_1)}{1+\bar{f}_4(z_1)f_4}.
\end{equation}
Substituting $\hat f_2$ and $\hat f_4$ into \eqref{eq-compact-dual}, we obtain the  extended frame  $\hat{F}_U(z,\bar z, \lambda)$  of the dual harmonic map $\hat{f}_U(z,\bar z, \lambda)$   of  $\hat f(z,\bar z, \lambda)$. One can check that in general  $\hat{f}_U(z,\bar z, \lambda)$  is not congruent to  $f_U(z,\bar z, \lambda)$  up to $\Lambda SO(6)_{\sigma}$.
In fact, we have
\[ |\dd f|^2=\frac{2|f_4'|^2|\dd z|^2}{(1+|f_4|^2)^2},\]
\[|\dd\tilde f|^2=\frac{2(|f_2'|^2d_1+|f_4'd_2 -f_2'f_4\bar{f}_2|^2)|\dd z|^2}{d_1^2d_2}=\frac{2(|f_2'|^2+|f_4'|^2+|f_2f_4'-f_4f_2'|^2|)\dd z|^2}{(1+|f_2|^2+| f_4|^2)^2}.\]
So in general
$|\dd f|^2=|\dd \hat f|^2$, while  $|\dd\tilde{\hat{f}}|^2\neq|\dd\tilde f|^2$ in general.
For example, set $f_2=z^2$, $f_4=z$, $z_0=0$ and $z_1=1$. Then
\[|\dd\tilde{f}|^2=\frac{2(r^4+4r^2+1)|\dd z|^2}{(r^4+r^2+1)^2}\neq|\dd\tilde{\hat{f}}|^2=\frac{2(r^4+4r^2+1+z^2+\bar{z}^2)|\dd z|^2}{(r^4+r^2+1-z^2-\bar{z}^2)^2}.\]
In particular, $\tilde{f}$ is not congruent to $\tilde{\hat{f}}$ up to $SO(6)$.
\end{example}

{\footnotesize
\def\refname{Reference}

}

\vspace{2mm}

{\footnotesize \begin{multicols}{2}

Josef F. Dorfmeister

Fakult\" at f\" ur Mathematik,

TU-M\" unchen, Boltzmann str. 3,

D-85747, Garching, Germany

{\em E-mail address}: dorfm@ma.tum.de\\

Peng Wang

School of Mathematics and Statistics, FJKLMAA,

Fujian Normal University, Qishan Campus,

Fuzhou 350117, P. R. China

{\em E-mail address}: {pengwang@fjnu.edu.cn}

\end{multicols}}

\end{document}